\newtheorem{theorem}{Theorem}
\theoremstyle{plain}
\newtheorem{lemma}{Lemma}
\numberwithin{equation}{section}
\begin{document}
\title[BVP'S with retarded argument]{On a generalized class of boundary
value problems with delayed argument}
\author{Erdo\u{g}an \c{S}en}
\address{Department of Mathematics, Faculty of Arts and Science, Tekirdag
Namik Kemal University, Tekirda\u{g}, Turkey}
\email{erdogan.math@gmail.com}
\thanks{}

\begin{abstract}
In this work, spectrum and asymptotics of eigenfunctions of a generalized
class of boundary value problems with a delay are obtained.

\noindent \textsc{2010 Mathematics Subject Classification.} 34L20, 35R10

\vspace{2mm}\noindent \textsc{Keywords and phrases.} Delay differential
equations; transmission conditions; asymptotics of eigenvalues and
eigenfunctions.
\end{abstract}

\maketitle

\section{Formulation of the problem}

In this study we shall investigate discontinuous eigenvalue problems which
consist of Sturm-Liouville equation
\begin{equation}
\left( -p(x)u^{\prime}(x)\right) ^{\prime}+q(x)u(x-\Delta(x))=\lambda
^{2}u(x)=0  \tag{1}
\end{equation}
on $\Omega=\cup\Omega^{\pm}$ with boundary conditions

\begin{equation}
\delta _{10}u(a)-\delta _{11}u^{\prime }(a)-\lambda ^{2}\left( \widetilde{%
\delta }_{10}u(a)-\widetilde{\delta }_{11}u^{\prime }(a)\right) =0,  \tag{2}
\end{equation}%
\begin{equation}
\delta _{20}u(b)-\delta _{21}u^{\prime }(b)+\lambda ^{2}\left( \widetilde{%
\delta }_{20}u(b)-\widetilde{\delta }_{21}u^{\prime }(b)\right) =0  \tag{3}
\end{equation}%
and transmission conditions%
\begin{equation}
\gamma _{10}^{+}u(c+)+\gamma _{10}^{-}u(c-)=0,  \tag{4}
\end{equation}%
\begin{equation*}
\gamma _{20}^{+}u(c+)+\gamma _{21}^{+}u^{\prime }(c+)+\gamma
_{20}^{-}u(c-)+\gamma _{21}^{-}u^{\prime }(c-)
\end{equation*}%
\begin{equation}
-\lambda ^{2}\left( \widetilde{\gamma }_{20}^{+}u(c+)+\widetilde{\gamma }%
_{21}^{+}u^{\prime }(c+)+\widetilde{\gamma }_{20}^{-}u(c-)+\widetilde{\gamma
}_{21}^{-}u^{\prime }(c-)\right) =0,  \tag{5}
\end{equation}%
where $p(x)=p_{1}^{2}$ for $x\in \Omega ^{-}=\left[ a,c\right) $ and $%
p(x)=p_{2}^{2}$ for $x\in \Omega ^{+}=\left( c,b\right] $; the real-valued
function $q(x)$ is continuous in $\Omega ~$and has a finite limit $q(c\pm
)=\lim_{x\rightarrow c\pm }q(x),$ the real valued function $\Delta (x)\geq 0$
continuous in $\Omega $ and has a finite limit $\Delta (c\pm
)=\lim_{x\rightarrow c\pm }\Delta (x)$, $x-\Delta (x)\geq a,$ if $\ x\in
\Omega ^{-};x-\Delta (x)\geq c,$ if $x\in \Omega ^{+};$ $\lambda $ is a real
spectral parameter; $p_{i},$ $\delta _{ij},$ $\widetilde{\delta }_{ij},$ $%
\gamma _{ij}^{\pm },$ $\widetilde{\gamma }_{ij}^{\pm }$ $(i=1,2;$ $j=0,1)$
are arbitrary real numbers such that $\gamma _{10}^{\pm }\left( \gamma
_{21}^{\pm }-\lambda ^{2}\widetilde{\gamma }_{21}^{\pm }\right) \neq 0$.

Sturm-Liouville problems with transmission conditions (also known as
interface conditons, discontinuity conditions, impulse effects) arise in
many applications of mathematical physics. Amongst the applications are
thermal conduction in a thin laminated plate made up of layers of different
materials and diffraction problems [11].

Sturm-Liouville problems with delayed argument is an active area of research
and arise in many realistic models of problems in science, engineering, and
medicine, where there is a time lag or after-effect (see [3]) and find
applications in combustion in a liquid propellant rocket engine [5,10] and
in systems of the type of an electromagnetic circuit-breaker [8,20]. The
articles [1,4,6,9,15,17-19,22] are devoted to investigation of the spectral
properties of eigenvalues and eigenfunctions of the Sturm-Liouville problems
with delayed argument.

The main goal of this paper is to study the spectrum and asymptotics of
eigenfunctions of the problem (1)-(5). Spectral properties of differential
equations with delayed argument which contain such a generalized boundary
and transmission conditions have not been studied yet. So, the results
obtained in this work are extension and generalization of previous works in
the literature. For example, if we take $\Delta (x)\equiv 0$ and/or $%
\widetilde{\delta }_{ij}=0$ $\left( i=1,2;\text{ }j=0,1\right) $ and/or $%
\widetilde{\gamma }_{2j}^{\pm }=0$ $\left( \text{ }j=0,1\right) $ and/or $%
p(x)\equiv 1$ then the asymptotic formulas for eigenvalues and
eigenfunctions correspond to those for the classical Sturm-Liouville problem
[2,7,12,13,14,21]. Moreover, results and methods of these kind of problems can
be useful for investigating the inverse problems for partial differential
equations.

Let $\vartheta ^{-}(x,\lambda )$ be a solution of Eq. (1) on $\overline{%
\Omega ^{-}}=\Omega ^{-}\cup \left\{ c\right\} ,$ satisfying the initial
conditions%
\begin{equation}
\vartheta ^{-}\left( a,\lambda \right) =\delta _{11}-\lambda ^{2}\widetilde{%
\delta }_{11},\text{ }\frac{\partial \vartheta ^{-}\left( a,\lambda \right)
}{\partial x}=\delta _{10}-\lambda ^{2}\widetilde{\delta }_{10}.  \tag{6}
\end{equation}%
The conditions (6) define a unique solution of Eq. (1) on $\overline{\Omega
^{-}}$ [16].

After defining the above solution we shall define the solution $\vartheta
^{+}\left( x,\lambda \right) $ of Eq. (1) on $\overline{\Omega ^{+}}=\Omega
^{+}\cup \left\{ c\right\} $ by means of the solution $\vartheta ^{-}\left(
x,\lambda \right) $ using the initial conditions%
\begin{equation}
\vartheta ^{+}\left( c+,\lambda \right) =-\frac{\gamma _{10}^{-}}{\gamma
_{10}^{+}}\vartheta ^{-}\left( c-,\lambda \right) ,  \tag{7}
\end{equation}%
\begin{equation}
\frac{\partial \vartheta ^{+}\left( c+,\lambda \right) }{\partial x}=\frac{1%
}{\gamma _{10}^{+}\left( \gamma _{21}^{+}-\lambda ^{2}\widetilde{\gamma }%
_{21}^{+}\right) }  \notag
\end{equation}%
\begin{equation}
\times \left[ \gamma _{10}^{+}\left( \lambda ^{2}\widetilde{\gamma }%
_{21}^{-}-\gamma _{21}^{-}\right) \frac{\partial \vartheta ^{-}\left(
c-,\lambda \right) }{\partial x}+\left( \gamma _{10}^{+}\left( \lambda ^{2}%
\widetilde{\gamma }_{20}^{-}-\gamma _{20}^{-}\right) -\gamma _{10}^{-}\left(
\lambda ^{2}\widetilde{\gamma }_{20}^{+}-\gamma _{20}^{+}\right) \right)
\vartheta ^{-}\left( c-,\lambda \right) \right]  \tag{8}
\end{equation}%
The conditions (7)-(8) are defined as a unique solution of Eq. (1) on $%
\overline{\Omega ^{+}}.$

Consequently, the function $\vartheta \left( x,\lambda \right) $ is defined
on $\Omega $ by the equality%
\begin{equation*}
\vartheta (x,\lambda )=\left\{
\begin{array}{ll}
\vartheta ^{-}(x,\lambda ), & x\in \Omega ^{-}, \\
\vartheta ^{+}(x,\lambda ), & x\in \Omega ^{+}%
\end{array}%
\right.
\end{equation*}%
is a solution of the Eq. (1) on $\Omega ;$ which satisfies one of the
boundary conditions and both transmission conditions.

\section{Spectrum and Asymptotics of Eigenfunctions}

We begin by writing the problem (1)-(5) in terms of the following equivalent
integral equations.

\begin{lemma}
Let $\vartheta\left( x,\lambda\right) $ be a solution of Eq.$(1)$ and\ $%
\lambda>0.$ Then the following integral equations hold:\
\begin{align}
\vartheta^{-}(x,\lambda) & =\left( \delta_{11}-\lambda^{2}\widetilde {\delta}%
_{11}\right) \cos\frac{\lambda\left( x-a\right) }{p_{1}}+\frac{p_{1}\left(
\delta_{10}-\lambda^{2}\widetilde{\delta}_{10}\right) }{\lambda}\sin\frac{%
\lambda\left( x-a\right) }{p_{1}}  \notag \\
& +\frac{1}{p_{1}\lambda}\int\limits_{a}^{{x}}q\left( \tau\right) \sin \frac{%
\lambda\left( x-\tau\right) }{p_{1}}\vartheta^{-}\left( \tau -\Delta\left(
\tau\right) ,\lambda\right) d\tau,  \tag{9}
\end{align}%
\begin{equation}
\vartheta^{+}(x,\lambda)=-\frac{\gamma_{10}^{-}\vartheta^{-}(c-,\lambda )}{%
\gamma_{10}^{+}}\cos\frac{\lambda\left( x-c\right) }{p_{2}}+\frac{p_{2}}{%
\lambda\gamma_{10}^{+}\left( \gamma_{21}^{+}-\lambda^{2}\widetilde{\gamma }%
_{21}^{+}\right) }  \notag
\end{equation}%
\begin{equation*}
\times\left[ \gamma_{10}^{+}\left( \lambda^{2}\widetilde{\gamma}%
_{21}^{-}-\gamma_{21}^{-}\right) \frac{\partial\vartheta^{-}(c-,\lambda)}{%
\partial x}+\left( \gamma_{10}^{+}\left( \lambda^{2}\widetilde{\gamma}%
_{20}^{-}-\gamma_{20}^{-}\right) -\gamma_{10}^{-}\left( \lambda^{2}%
\widetilde{\gamma}_{20}^{+}-\gamma_{20}^{+}\right) \right) \vartheta
^{-}\left( c-,\lambda\right) \right]
\end{equation*}%
\begin{equation}
\times\sin\frac{\lambda\left( x-c\right) }{p_{2}}+\frac{1}{p_{2}\lambda}%
\int\limits_{c+}^{{x}}q\left( \tau\right) \sin\frac{\lambda\left(
x-\tau\right) }{p_{2}}\vartheta^{+}\left( \tau-\Delta\left( \tau\right)
,\lambda\right) d\tau  \tag{10}
\end{equation}
\end{lemma}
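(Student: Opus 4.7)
The plan is to derive each integral equation by treating the delay term $q(x)\vartheta(x-\Delta(x),\lambda)$ as an inhomogeneity and applying variation of parameters to the resulting linear ODE on each subinterval.

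On $\overline{\Omega^{-}}$, since $p(x)=p_{1}^{2}$, equation (1) may be rewritten in the form
\begin{equation*}
\vartheta^{-\prime\prime}(x,\lambda)+\left(\tfrac{\lambda}{p_{1}}\right)^{2}\vartheta^{-}(x,\lambda)=\tfrac{q(x)}{p_{1}^{2}}\,\vartheta^{-}\!\left(x-\Delta(x),\lambda\right).
\end{equation*}
The associated homogeneous equation admits the fundamental pair $\cos\!\bigl(\lambda(x-a)/p_{1}\bigr)$ and $\sin\!\bigl(\lambda(x-a)/p_{1}\bigr)$ with Wronskian $\lambda/p_{1}$. By the variation-of-parameters formula (or equivalently, by the standard Green-function representation for $y''+k^{2}y=f$ with vanishing Cauchy data at $x=a$), any solution has the form
\begin{equation*}
\vartheta^{-}(x,\lambda)=A\cos\tfrac{\lambda(x-a)}{p_{1}}+B\sin\tfrac{\lambda(x-a)}{p_{1}}+\tfrac{1}{p_{1}\lambda}\int_{a}^{x}q(\tau)\sin\tfrac{\lambda(x-\tau)}{p_{1}}\,\vartheta^{-}\!\left(\tau-\Delta(\tau),\lambda\right)d\tau,
\end{equation*}
because the integral term and its $x$-derivative both vanish at $x=a$. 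Inserting the initial conditions (6) determines $A=\delta_{11}-\lambda^{2}\widetilde{\delta}_{11}$ and $B=p_{1}(\delta_{10}-\lambda^{2}\widetilde{\delta}_{10})/\lambda$, which reproduces the integral equation (9).

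For the second identity, I would repeat the argument on $\overline{\Omega^{+}}$ with $p(x)=p_{2}^{2}$, basepoint $x=c+$, and the fundamental pair $\cos\!\bigl(\lambda(x-c)/p_{2}\bigr)$, $\sin\!\bigl(\lambda(x-c)/p_{2}\bigr)$. The same variation-of-parameters argument gives
\begin{equation*}
\vartheta^{+}(x,\lambda)=\vartheta^{+}(c+,\lambda)\cos\tfrac{\lambda(x-c)}{p_{2}}+\tfrac{p_{2}}{\lambda}\,\tfrac{\partial\vartheta^{+}(c+,\lambda)}{\partial x}\sin\tfrac{\lambda(x-c)}{p_{2}}+\tfrac{1}{p_{2}\lambda}\int_{c+}^{x}q(\tau)\sin\tfrac{\lambda(x-\tau)}{p_{2}}\,\vartheta^{+}\!\left(\tau-\Delta(\tau),\lambda\right)d\tau.
\end{equation*}
Substituting the transmission initial data (7)--(8) for $\vartheta^{+}(c+,\lambda)$ and $\partial_{x}\vartheta^{+}(c+,\lambda)$ into this expression immediately yields (10).

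The step that requires the most care is purely bookkeeping: verifying that the prefactor of $\sin\!\bigl(\lambda(x-c)/p_{2}\bigr)$ in (10) is exactly $(p_{2}/\lambda)\,\partial_{x}\vartheta^{+}(c+,\lambda)$ once (8) is expanded, and that the denominator $\gamma_{10}^{+}(\gamma_{21}^{+}-\lambda^{2}\widetilde{\gamma}_{21}^{+})$ is nonzero by the standing hypothesis. No real analytic obstacle arises, because the delay argument $\tau-\Delta(\tau)$ lies in the appropriate subinterval by the assumption $x-\Delta(x)\geq a$ on $\Omega^{-}$ and $x-\Delta(x)\geq c$ on $\Omega^{+}$, so the integral in each formula makes sense and the representation is genuinely an \emph{implicit} integral equation for $\vartheta^{\pm}$ (the usual Volterra-type equation for delay Sturm--Liouville problems).
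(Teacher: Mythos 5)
Your proposal is correct and is essentially the same argument as the paper's: the paper's one-line proof verifies (9)--(10) by substituting $\lambda^{2}\vartheta^{\pm}+\partial_{\tau}^{2}\vartheta^{\pm}$ for $q(\tau)\vartheta^{\pm}(\tau-\Delta(\tau),\lambda)$ in the integral term and integrating by parts twice, which is exactly the reverse direction of your variation-of-parameters (Duhamel) derivation with the delay term treated as an inhomogeneity. Your version is the constructive one and your bookkeeping of the Wronskian $\lambda/p_{1}$, the coefficients $A$, $B$ from (6)--(8), and the admissibility of the delayed argument is accurate, so nothing is missing.
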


\begin{proof}
To prove this, it is enough to substitute $\>\lambda^{2}\vartheta^{\pm}(%
\tau,\lambda)+\frac{\partial^{2}\vartheta^{\pm}\left( \tau,\lambda\right) }{%
\partial\tau^{2}}\>$instead of $q(\tau)\vartheta^{\pm}(\tau-\Delta
(\tau),\lambda)$ in (9) and (10) respectively and integrate by parts twice.
\end{proof}

From Lemma 1, using the well-known successive approximation method, it is
easy to obtain the following asymptotic expressions of fundamental solutions.

\begin{lemma}
The following asymptotic estimates
\begin{equation*}
\vartheta ^{-}\left( x,\lambda \right) =-\lambda ^{2}\widetilde{\delta }%
_{11}\cos \frac{\lambda \left( x-a\right) }{p_{1}}+O\left( \lambda \right) ,
\end{equation*}%
\begin{equation*}
\frac{\partial \vartheta ^{-}\left( x,\lambda \right) }{\partial x}=\frac{%
\lambda ^{3}\widetilde{\delta }_{11}}{p_{1}}\sin \frac{\lambda \left(
x-a\right) }{p_{1}}+O\left( \lambda ^{2}\right) ,
\end{equation*}%
\begin{equation*}
\vartheta ^{+}\left( x,\lambda \right) =\frac{\lambda ^{4}p_{2}\widetilde{%
\delta }_{11}\widetilde{\gamma }_{21}^{-}}{p_{1}\gamma _{21}^{+}}\sin \frac{%
\lambda \left( c-a\right) }{p_{1}}\sin \frac{\lambda \left( x-c\right) }{%
p_{2}}+O\left( \lambda ^{3}\right) ,
\end{equation*}%
\begin{equation*}
\frac{\partial \vartheta ^{+}\left( x,\lambda \right) }{\partial x}=\frac{%
\lambda ^{5}\widetilde{\delta }_{11}\widetilde{\gamma }_{21}^{-}}{%
p_{1}\gamma _{21}^{+}}\sin \frac{\lambda \left( c-a\right) }{p_{1}}\cos
\frac{\lambda \left( x-c\right) }{p_{2}}+O\left( \lambda ^{4}\right)
\end{equation*}%
are valid as $\lambda \rightarrow \infty $.
\end{lemma}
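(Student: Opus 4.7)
The plan is to apply successive approximations (equivalently, a Gronwall-type bound) to the Volterra-type integral equations $(9)$ and $(10)$ established in Lemma~1, then to extract the leading powers of $\lambda$ from the explicit parts.

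First, for $\vartheta^{-}$: the non-integral part of $(9)$ is a trigonometric expression of size $O(\lambda^{2})$, with leading piece $-\lambda^{2}\widetilde{\delta}_{11}\cos\tfrac{\lambda(x-a)}{p_{1}}$. Setting $M(\lambda):=\sup_{x\in[a,c]}|\vartheta^{-}(x,\lambda)|$, equation $(9)$ gives $M(\lambda)\le C_{1}\lambda^{2}+\tfrac{C_{2}}{\lambda}M(\lambda)$, so for $\lambda$ sufficiently large $M(\lambda)=O(\lambda^{2})$. Reinserting this crude bound in the integral term of $(9)$ shows that the integral is $O(\lambda)$; combined with the subleading parts of the explicit piece this produces the first asymptotic. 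Differentiating $(9)$ in $x$, the upper-limit term vanishes because $\sin 0 = 0$, and one obtains
\begin{equation*}
\frac{\partial\vartheta^{-}}{\partial x}=-\frac{\lambda(\delta_{11}-\lambda^{2}\widetilde{\delta}_{11})}{p_{1}}\sin\frac{\lambda(x-a)}{p_{1}}+(\delta_{10}-\lambda^{2}\widetilde{\delta}_{10})\cos\frac{\lambda(x-a)}{p_{1}}+\frac{1}{p_{1}^{2}}\int_{a}^{x}q(\tau)\cos\frac{\lambda(x-\tau)}{p_{1}}\vartheta^{-}(\tau-\Delta(\tau),\lambda)\,d\tau,
\end{equation*}
whose leading term is $\tfrac{\lambda^{3}\widetilde{\delta}_{11}}{p_{1}}\sin\tfrac{\lambda(x-a)}{p_{1}}$ and whose remainder is $O(\lambda^{2})$ by the bound on $M(\lambda)$.

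For the $\Omega^{+}$ estimates, I would specialize the two $\vartheta^{-}$-asymptotics at $x=c-$ and insert them into the explicit part of $(10)$. The prefactor $\tfrac{p_{2}}{\lambda\gamma_{10}^{+}(\gamma_{21}^{+}-\lambda^{2}\widetilde{\gamma}_{21}^{+})}$ must be expanded in inverse powers of $\lambda$; combined with the bracket, the dominant contribution comes from the term $\gamma_{10}^{+}\lambda^{2}\widetilde{\gamma}_{21}^{-}\,\partial_{x}\vartheta^{-}(c-,\lambda)$, which after multiplication by the prefactor and by $\sin\tfrac{\lambda(x-c)}{p_{2}}$ yields the stated $\lambda^{4}$-order leading term. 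All other explicit contributions are $O(\lambda^{3})$. A Gronwall argument on $(10)$ starting from this $O(\lambda^{4})$ bound gives $\vartheta^{+}(x,\lambda)=O(\lambda^{4})$ uniformly on $[c,b]$, so the integral term of $(10)$ is $O(\lambda^{3})$. Differentiating $(10)$ in $x$ and running the same analysis, the extra factor $\lambda/p_{2}$ coming from differentiating the sine promotes the leading $\lambda^{4}$ term to $\lambda^{5}$ and swaps sine for cosine, producing the last asymptotic.

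The main technical obstacle is the bookkeeping inside the $\vartheta^{+}$ formula: one must carefully track which cross-products of the leading and subleading pieces of the prefactor, of the bracket, and of the two $\vartheta^{-}$-asymptotics at $c-$ actually contribute to the claimed principal part, and verify that the remaining contributions---together with the integral term---genuinely lie in the stated remainder classes. Once this is done, the argument on $\Omega^{+}$ is structurally identical to that on $\Omega^{-}$, and the lemma follows.
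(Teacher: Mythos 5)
The paper offers no actual proof of this lemma---it is dispatched with the single remark that it follows ``from Lemma 1, using the well-known successive approximation method''---so your plan (a Gronwall/successive-approximation bound on the Volterra equations (9)--(10), followed by extraction of the leading powers of $\lambda$) is exactly the intended route. Your treatment of $\vartheta^{-}$ and $\partial_x\vartheta^{-}$ is sound: the a priori bound $M(\lambda)=O(\lambda^{2})$, the resulting $O(\lambda)$ estimate for the integral term, and the differentiated identity (whose sign you have right; note the paper's own Eq.\ (12) carries the opposite, incorrect sign on the first term) all check out and yield the first two formulas.

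There is, however, a genuine gap in the $\Omega^{+}$ half, precisely at the step you defer as ``bookkeeping.'' You assert that the term $\gamma_{10}^{+}(\lambda^{2}\widetilde{\gamma}_{21}^{-}-\gamma_{21}^{-})\,\partial_{x}\vartheta^{-}(c-,\lambda)$, multiplied by the prefactor $\tfrac{p_{2}}{\lambda\gamma_{10}^{+}(\gamma_{21}^{+}-\lambda^{2}\widetilde{\gamma}_{21}^{+})}$, produces the stated $\lambda^{4}$ leading term $\tfrac{\lambda^{4}p_{2}\widetilde{\delta}_{11}\widetilde{\gamma}_{21}^{-}}{p_{1}\gamma_{21}^{+}}\sin\tfrac{\lambda(c-a)}{p_{1}}\sin\tfrac{\lambda(x-c)}{p_{2}}$. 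But if $\widetilde{\gamma}_{21}^{+}\neq 0$ the prefactor is $\sim -p_{2}/(\lambda^{3}\gamma_{10}^{+}\widetilde{\gamma}_{21}^{+})$, and since the bracket is $O(\lambda^{5})$ the product is only $O(\lambda^{2})$, with coefficient $-p_{2}\widetilde{\delta}_{11}\widetilde{\gamma}_{21}^{-}/(p_{1}\widetilde{\gamma}_{21}^{+})$ --- comparable to the first explicit term of (10), and nothing like the claimed principal part. The stated formula emerges only when $\widetilde{\gamma}_{21}^{+}=0$, so that the prefactor is exactly $p_{2}/(\lambda\gamma_{10}^{+}\gamma_{21}^{+})$. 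In other words, carrying out the bookkeeping you postponed does not confirm the claimed leading term in general; it shows either that an additional hypothesis ($\widetilde{\gamma}_{21}^{+}=0$) is silently in force, or that the lemma's $\vartheta^{+}$ formulas need correction. Your proof as written papers over exactly the point where the argument breaks, so the $\vartheta^{+}$ and $\partial_{x}\vartheta^{+}$ estimates cannot be accepted on the strength of the sketch. (The same defect propagates to your claim that $\vartheta^{+}=O(\lambda^{4})$ and hence that the integral term is $O(\lambda^{3})$; with $\widetilde{\gamma}_{21}^{+}\neq0$ one only gets $\vartheta^{+}=O(\lambda^{2})$, which changes the error classes throughout.)
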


The function $\vartheta (x,\lambda )\>$defined in introduction is a
nontrivial solution of Eq. (1) satisfying conditions (2), (4) and (5).
Putting$\>\vartheta (x,\lambda )\>$into (3), we get the characteristic
equation%
\begin{equation}
\Xi (\lambda )\equiv \delta _{20}\vartheta ^{+}(b,\lambda )-\delta _{21}%
\frac{\partial \vartheta ^{+}\left( b,\lambda \right) }{\partial x}+\lambda
^{2}\left( \widetilde{\delta }_{20}\vartheta ^{+}(b,\lambda )-\widetilde{%
\delta }_{21}\frac{\partial \vartheta ^{+}\left( b,\lambda \right) }{%
\partial x}\right) =0.  \tag{11}
\end{equation}

Thus the set of eigenvalues of boundary-value problem (1)-(5) coincides with
the set of real roots of Eq. (11).

\begin{theorem}
The problem $(1)-(5)$ has an infinite set of positive eigenvalues.
\end{theorem}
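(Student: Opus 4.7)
The plan is to insert the asymptotic estimates from Lemma 2 into the characteristic function (11), extract an explicit leading trigonometric term whose amplitude grows as a fixed power of $\lambda$, and then locate infinitely many sign changes of $\Xi(\lambda)$ by the intermediate value theorem.

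First I would evaluate (11) at $x=b$ using Lemma 2. Writing $\Xi(\lambda)=(\delta_{20}+\lambda^{2}\widetilde{\delta}_{20})\vartheta^{+}(b,\lambda)-(\delta_{21}+\lambda^{2}\widetilde{\delta}_{21})\partial_{x}\vartheta^{+}(b,\lambda)$, the term of highest order in $\lambda$ comes from multiplying $\lambda^{2}\widetilde{\delta}_{21}$ with the $\lambda^{5}$-leading piece of $\partial_{x}\vartheta^{+}(b,\lambda)$. A direct substitution yields the expansion
\[
\Xi(\lambda)=-\frac{\lambda^{7}\widetilde{\delta}_{11}\widetilde{\delta}_{21}\widetilde{\gamma}_{21}^{-}}{p_{1}\gamma_{21}^{+}}\sin\frac{\lambda(c-a)}{p_{1}}\cos\frac{\lambda(b-c)}{p_{2}}+O(\lambda^{6})
\]
as $\lambda\to\infty$. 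Under the implicit non-degeneracy assumption $\widetilde{\delta}_{11}\widetilde{\delta}_{21}\widetilde{\gamma}_{21}^{-}\neq 0$, the prefactor is a nonzero constant; I will flag that the degenerate subcases (some tildes vanishing) reduce to a leading term of strictly lower order but of the same trigonometric structure, so the argument adapts line by line.

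Next I would produce two sequences $\{\mu_{k}^{+}\}, \{\mu_{k}^{-}\}\subset(0,\infty)$, both tending to $+\infty$, on which the trigonometric factor $F(\lambda)=\sin\frac{\lambda(c-a)}{p_{1}}\cos\frac{\lambda(b-c)}{p_{2}}$ takes values $\geq \kappa$ and $\leq -\kappa$ respectively for some fixed $\kappa>0$, with $\mu_{k}^{-}$ interlacing $\mu_{k}^{+}$. The cleanest way is to fix $\lambda$ so that $\sin\frac{\lambda(c-a)}{p_{1}}=\pm 1$, i.e.\ $\lambda=\frac{p_{1}\pi}{c-a}(k+\tfrac12)$, and then observe that within any interval of length comparable to $p_{2}\pi/(b-c)$ one can shift $\lambda$ so that $\cos\frac{\lambda(b-c)}{p_{2}}$ stays of any prescribed sign with $|\cos|\geq\frac12$, while still keeping $|\sin\frac{\lambda(c-a)}{p_{1}}|\geq \frac12$ (by a standard equidistribution/continuity argument independent of whether $(c-a)/p_{1}$ and $(b-c)/p_{2}$ are commensurable). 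Hence $|F|$ is uniformly bounded below by a positive constant on each $\mu_{k}^{\pm}$, while the signs alternate.

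Finally, at these sample points the leading term of $\Xi$ has absolute value of order $\lambda^{7}$ while the remainder is $O(\lambda^{6})$; for all sufficiently large $k$ the sign of $\Xi(\mu_{k}^{\pm})$ therefore coincides with the sign of the leading term. Since $\Xi$ is continuous in $\lambda$, the intermediate value theorem then furnishes a zero of $\Xi$ between each consecutive pair $\mu_{k}^{+}<\mu_{k+1}^{-}$ (or vice versa), yielding an infinite sequence of positive real eigenvalues. The main obstacle is the middle step: controlling $|F(\lambda)|$ uniformly from below on an unbounded set of alternating-sign sample points, because the product $\sin\cdot\cos$ can in principle be small on long stretches when the two frequencies are nearly rationally related. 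This is overcome by choosing the sample points within small windows of widths proportional to the two periods so that at least one of the factors is pinned to $\pm 1$ and the other is forced away from zero.
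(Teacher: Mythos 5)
Your proposal follows essentially the same route as the paper: substitute the asymptotics of Lemma 2 into the characteristic function $(11)$ to obtain the leading term $-\frac{\lambda^{7}\widetilde{\delta}_{11}\widetilde{\delta}_{21}\widetilde{\gamma}_{21}^{-}}{p_{1}\gamma_{21}^{+}}\sin\frac{\lambda(c-a)}{p_{1}}\cos\frac{\lambda(b-c)}{p_{2}}+O(\lambda^{6})$, exactly as in $(14)$, and then deduce infinitely many positive roots. Your intermediate-value argument with alternating-sign sample points (and the explicit flagging of the non-degeneracy condition $\widetilde{\delta}_{11}\widetilde{\delta}_{21}\widetilde{\gamma}_{21}^{-}\neq 0$) merely supplies the detail that the paper dismisses with ``obviously, evidently,'' so it is a faithful and indeed more complete rendering of the same proof.
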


\begin{proof}
Differentiating (9) with respect to$\>x$, we get%
\begin{equation*}
\frac{\partial \vartheta ^{-}(x,\lambda )}{\partial x}=\frac{\lambda \left(
\delta _{11}-\lambda ^{2}\widetilde{\delta }_{11}\right) }{p_{1}}\sin \frac{%
\lambda \left( x-a\right) }{p_{1}}+\left( \delta _{10}-\lambda ^{2}%
\widetilde{\delta }_{10}\right) \cos \frac{\lambda \left( x-a\right) }{p_{1}}
\end{equation*}%
\begin{equation}
+\frac{1}{p_{1}^{2}}\int\limits_{a}^{{x}}q\left( \tau \right) \cos \frac{%
\lambda \left( x-\tau \right) }{p_{1}}\vartheta ^{-}\left( \tau -\Delta
\left( \tau \right) ,\lambda \right) d\tau .  \tag{12}
\end{equation}%
Differentiating (10) with respect to$\>x$, we get%
\begin{equation*}
\frac{\partial \vartheta ^{+}(x,\lambda )}{\partial x}=\frac{\lambda \gamma
_{10}^{-}\vartheta ^{-}(c-,\lambda )}{p_{2}\gamma _{10}^{+}}\sin \frac{%
\lambda \left( x-c\right) }{p_{2}}+\frac{1}{\gamma _{10}^{+}\left( \gamma
_{21}^{+}-\lambda ^{2}\widetilde{\gamma }_{21}^{+}\right) }
\end{equation*}%
\begin{equation*}
\times \left[ \gamma _{10}^{+}\left( \lambda ^{2}\widetilde{\gamma }%
_{21}^{-}-\gamma _{21}^{-}\right) \frac{\partial \vartheta ^{-}(c-,\lambda )%
}{\partial x}+\left( \gamma _{10}^{+}\left( \lambda ^{2}\widetilde{\gamma }%
_{20}^{-}-\gamma _{20}^{-}\right) -\gamma _{10}^{-}\left( \lambda ^{2}%
\widetilde{\gamma }_{20}^{+}-\gamma _{20}^{+}\right) \right) \vartheta
^{-}\left( c-,\lambda \right) \right]
\end{equation*}%
\begin{equation}
\times \cos \frac{\lambda \left( x-c\right) }{p_{2}}+\frac{1}{p_{2}^{2}}%
\int\limits_{c+}^{{x}}q\left( \tau \right) \cos \frac{\lambda \left( x-\tau
\right) }{p_{2}}\vartheta ^{+}\left( \tau -\Delta \left( \tau \right)
,\lambda \right) d\tau .  \tag{13}
\end{equation}%
Putting the expressions (9), (10), (12) and (13) into (11), we get%
\begin{equation*}
\Xi (\lambda )\equiv \delta _{20}\left[ -\frac{\gamma _{10}^{-}}{\gamma
_{10}^{+}}\left( \left( \delta _{11}-\lambda ^{2}\widetilde{\delta }%
_{11}\right) \cos \frac{\lambda \left( c-a\right) }{p_{1}}+\frac{p_{1}\left(
\delta _{10}-\lambda ^{2}\widetilde{\delta }_{10}\right) }{\lambda }\sin
\frac{\lambda \left( c-a\right) }{p_{1}}\right. \right.
\end{equation*}%
\begin{equation*}
\left. +\frac{1}{p_{1}\lambda }\int\limits_{a}^{{c}}q\left( \tau \right)
\sin \frac{\lambda \left( c-\tau \right) }{p_{1}}\vartheta ^{-}\left( \tau
-\Delta \left( \tau \right) ,\lambda \right) d\tau \right) \cos \frac{%
\lambda \left( b-c\right) }{p_{2}}+\frac{p_{2}}{\lambda \gamma
_{10}^{+}\left( \gamma _{21}^{+}-\lambda ^{2}\widetilde{\gamma }%
_{21}^{+}\right) }
\end{equation*}%
\begin{equation}
\times \left( \gamma _{10}^{+}\left( \lambda ^{2}\widetilde{\gamma }%
_{21}^{-}-\gamma _{21}^{-}\right) \left( \frac{\lambda \left( \delta
_{11}-\lambda ^{2}\widetilde{\delta }_{11}\right) }{p_{1}}\sin \frac{\lambda
\left( c-a\right) }{p_{1}}+\left( \delta _{10}-\lambda ^{2}\widetilde{\delta
}_{10}\right) \cos \frac{\lambda \left( c-a\right) }{p_{1}}\right. \right.
\notag
\end{equation}%
\begin{equation*}
\left. +\frac{1}{p_{1}^{2}}\int\limits_{a}^{{c}}q\left( \tau \right) \cos
\frac{\lambda \left( c-\tau \right) }{p_{1}}\vartheta ^{-}\left( \tau
-\Delta \left( \tau \right) ,\lambda \right) d\tau \right) +\left( \gamma
_{10}^{+}\left( \lambda ^{2}\widetilde{\gamma }_{20}^{-}-\gamma
_{20}^{-}\right) -\gamma _{10}^{-}\left( \lambda ^{2}\widetilde{\gamma }%
_{20}^{+}-\gamma _{20}^{+}\right) \right)
\end{equation*}%
\begin{equation*}
\times \left( -\frac{\gamma _{10}^{-}}{\gamma _{10}^{+}}\left( \left( \delta
_{11}-\lambda ^{2}\widetilde{\delta }_{11}\right) \cos \frac{\lambda \left(
c-a\right) }{p_{1}}+\frac{p_{1}\left( \delta _{10}-\lambda ^{2}\widetilde{%
\delta }_{10}\right) }{\lambda }\sin \frac{\lambda \left( c-a\right) }{p_{1}}%
\right. \right.
\end{equation*}%
\begin{equation*}
\left. \left. +\frac{1}{p_{1}\lambda }\int\limits_{a}^{{c}}q\left( \tau
\right) \sin \frac{\lambda \left( c-\tau \right) }{p_{1}}\vartheta
^{-}\left( \tau -\Delta \left( \tau \right) ,\lambda \right) d\tau \right)
\right) \sin \frac{\lambda \left( b-c\right) }{p_{2}}
\end{equation*}%
\begin{equation*}
\left. +\frac{1}{p_{2}\lambda }\int\limits_{c+}^{{b}}q\left( \tau \right)
\sin \frac{\lambda \left( b-\tau \right) }{p_{2}}\vartheta ^{+}\left( \tau
-\Delta \left( \tau \right) ,\lambda \right) d\tau \right]
\end{equation*}%
\begin{equation*}
-\delta _{21}\left[ \frac{\lambda \gamma _{10}^{-}}{p_{2}\gamma _{10}^{+}}%
\left( \left( \left( \delta _{11}-\lambda ^{2}\widetilde{\delta }%
_{11}\right) \cos \frac{\lambda \left( c-a\right) }{p_{1}}+\frac{p_{1}\left(
\delta _{10}-\lambda ^{2}\widetilde{\delta }_{10}\right) }{\lambda }\sin
\frac{\lambda \left( c-a\right) }{p_{1}}\right. \right. \right.
\end{equation*}%
\begin{equation*}
\left. \left. +\frac{1}{p_{1}\lambda }\int\limits_{a}^{{c}}q\left( \tau
\right) \sin \frac{\lambda \left( c-\tau \right) }{p_{1}}\vartheta
^{-}\left( \tau -\Delta \left( \tau \right) ,\lambda \right) d\tau \right)
\right) \sin \frac{\lambda \left( b-c\right) }{p_{2}}+\frac{1}{\gamma
_{10}^{+}\left( \gamma _{21}^{+}-\lambda ^{2}\widetilde{\gamma }%
_{21}^{+}\right) }
\end{equation*}%
\begin{equation*}
\times \left( \gamma _{10}^{+}\left( \lambda ^{2}\widetilde{\gamma }%
_{21}^{-}-\gamma _{21}^{-}\right) \left( \frac{\lambda \left( \delta
_{11}-\lambda ^{2}\widetilde{\delta }_{11}\right) }{p_{1}}\sin \frac{\lambda
\left( c-a\right) }{p_{1}}+\left( \delta _{10}-\lambda ^{2}\widetilde{\delta
}_{10}\right) \cos \frac{\lambda \left( c-a\right) }{p_{1}}\right. \right.
\end{equation*}%
\begin{equation*}
\left. +\frac{1}{p_{1}^{2}}\int\limits_{a}^{{c}}q\left( \tau \right) \cos
\frac{\lambda \left( c-\tau \right) }{p_{1}}\vartheta ^{-}\left( \tau
-\Delta \left( \tau \right) ,\lambda \right) d\tau \right) +\left( \gamma
_{10}^{+}\left( \lambda ^{2}\widetilde{\gamma }_{20}^{-}-\gamma
_{20}^{-}\right) -\gamma _{10}^{-}\left( \lambda ^{2}\widetilde{\gamma }%
_{20}^{+}-\gamma _{20}^{+}\right) \right)
\end{equation*}%
\begin{equation*}
\times \left( \left( \delta _{11}-\lambda ^{2}\widetilde{\delta }%
_{11}\right) \cos \frac{\lambda \left( c-a\right) }{p_{1}}+\frac{p_{1}\left(
\delta _{10}-\lambda ^{2}\widetilde{\delta }_{10}\right) }{\lambda }\sin
\frac{\lambda \left( c-a\right) }{p_{1}}\right.
\end{equation*}%
\begin{equation*}
\left. \left. +\frac{1}{p_{1}\lambda }\int\limits_{a}^{{c}}q\left( \tau
\right) \sin \frac{\lambda \left( c-\tau \right) }{p_{1}}\vartheta
^{-}\left( \tau -\Delta \left( \tau \right) ,\lambda \right) d\tau \right)
\right) \cos \frac{\lambda \left( b-c\right) }{p_{2}}
\end{equation*}%
\begin{equation*}
\left. +\frac{1}{p_{2}^{2}}\int\limits_{c+}^{{b}}q\left( \tau \right) \cos
\frac{\lambda \left( b-\tau \right) }{p_{2}}\vartheta ^{+}\left( \tau
-\Delta \left( \tau \right) ,\lambda \right) d\tau \right]
\end{equation*}%
\begin{equation*}
+\lambda ^{2}\left\{ \widetilde{\delta }_{20}\left[ -\frac{\gamma _{10}^{-}}{%
\gamma _{10}^{+}}\left( \left( \delta _{11}-\lambda ^{2}\widetilde{\delta }%
_{11}\right) \cos \frac{\lambda \left( c-a\right) }{p_{1}}+\frac{p_{1}\left(
\delta _{10}-\lambda ^{2}\widetilde{\delta }_{10}\right) }{\lambda }\sin
\frac{\lambda \left( c-a\right) }{p_{1}}\right. \right. \right.
\end{equation*}%
\begin{equation*}
\left. +\frac{1}{p_{1}\lambda }\int\limits_{a}^{{c}}q\left( \tau \right)
\sin \frac{\lambda \left( c-\tau \right) }{p_{1}}\vartheta ^{-}\left( \tau
-\Delta \left( \tau \right) ,\lambda \right) d\tau \right) \cos \frac{%
\lambda \left( b-c\right) }{p_{2}}+\frac{p_{2}}{\lambda \gamma
_{10}^{+}\left( \gamma _{21}^{+}-\lambda ^{2}\widetilde{\gamma }%
_{21}^{+}\right) }
\end{equation*}%
\begin{equation}
\times \left( \gamma _{10}^{+}\left( \lambda ^{2}\widetilde{\gamma }%
_{21}^{-}-\gamma _{21}^{-}\right) \left( \frac{\lambda \left( \delta
_{11}-\lambda ^{2}\widetilde{\delta }_{11}\right) }{p_{1}}\sin \frac{\lambda
\left( c-a\right) }{p_{1}}+\left( \delta _{10}-\lambda ^{2}\widetilde{\delta
}_{10}\right) \cos \frac{\lambda \left( c-a\right) }{p_{1}}\right. \right.
\notag
\end{equation}%
\begin{equation*}
\left. +\frac{1}{p_{1}^{2}}\int\limits_{a}^{{c}}q\left( \tau \right) \cos
\frac{\lambda \left( c-\tau \right) }{p_{1}}\vartheta ^{-}\left( \tau
-\Delta \left( \tau \right) ,\lambda \right) d\tau \right) +\left( \gamma
_{10}^{+}\left( \lambda ^{2}\widetilde{\gamma }_{20}^{-}-\gamma
_{20}^{-}\right) -\gamma _{10}^{-}\left( \lambda ^{2}\widetilde{\gamma }%
_{20}^{+}-\gamma _{20}^{+}\right) \right)
\end{equation*}%
\begin{equation*}
\times \left( -\frac{\gamma _{10}^{-}}{\gamma _{10}^{+}}\left( \left( \delta
_{11}-\lambda ^{2}\widetilde{\delta }_{11}\right) \cos \frac{\lambda \left(
c-a\right) }{p_{1}}+\frac{p_{1}\left( \delta _{10}-\lambda ^{2}\widetilde{%
\delta }_{10}\right) }{\lambda }\sin \frac{\lambda \left( c-a\right) }{p_{1}}%
\right. \right.
\end{equation*}%
\begin{equation*}
\left. \left. +\frac{1}{p_{1}\lambda }\int\limits_{a}^{{c}}q\left( \tau
\right) \sin \frac{\lambda \left( c-\tau \right) }{p_{1}}\vartheta
^{-}\left( \tau -\Delta \left( \tau \right) ,\lambda \right) d\tau \right)
\right) \sin \frac{\lambda \left( b-c\right) }{p_{2}}
\end{equation*}%
\begin{equation*}
\left. +\frac{1}{p_{2}\lambda }\int\limits_{c+}^{{b}}q\left( \tau \right)
\sin \frac{\lambda \left( b-\tau \right) }{p_{2}}\vartheta ^{+}\left( \tau
-\Delta \left( \tau \right) ,\lambda \right) d\tau \right]
\end{equation*}%
\begin{equation*}
-\widetilde{\delta }_{21}\left[ \frac{\lambda \gamma _{10}^{-}}{p_{2}\gamma
_{10}^{+}}\left( \left( \left( \delta _{11}-\lambda ^{2}\widetilde{\delta }%
_{11}\right) \cos \frac{\lambda \left( c-a\right) }{p_{1}}+\frac{p_{1}\left(
\delta _{10}-\lambda ^{2}\widetilde{\delta }_{10}\right) }{\lambda }\sin
\frac{\lambda \left( c-a\right) }{p_{1}}\right. \right. \right.
\end{equation*}%
\begin{equation*}
\left. \left. +\frac{1}{p_{1}\lambda }\int\limits_{a}^{{c}}q\left( \tau
\right) \sin \frac{\lambda \left( c-\tau \right) }{p_{1}}\vartheta
^{-}\left( \tau -\Delta \left( \tau \right) ,\lambda \right) d\tau \right)
\right) \sin \frac{\lambda \left( b-c\right) }{p_{2}}+\frac{1}{\gamma
_{10}^{+}\left( \gamma _{21}^{+}-\lambda ^{2}\widetilde{\gamma }%
_{21}^{+}\right) }
\end{equation*}%
\begin{equation*}
\times \left( \gamma _{10}^{+}\left( \lambda ^{2}\widetilde{\gamma }%
_{21}^{-}-\gamma _{21}^{-}\right) \left( \frac{\lambda \left( \delta
_{11}-\lambda ^{2}\widetilde{\delta }_{11}\right) }{p_{1}}\sin \frac{\lambda
\left( c-a\right) }{p_{1}}+\left( \delta _{10}-\lambda ^{2}\widetilde{\delta
}_{10}\right) \cos \frac{\lambda \left( c-a\right) }{p_{1}}\right. \right.
\end{equation*}%
\begin{equation*}
\left. +\frac{1}{p_{1}^{2}}\int\limits_{a}^{{c}}q\left( \tau \right) \cos
\frac{\lambda \left( c-\tau \right) }{p_{1}}\vartheta ^{-}\left( \tau
-\Delta \left( \tau \right) ,\lambda \right) d\tau \right) +\left( \gamma
_{10}^{+}\left( \lambda ^{2}\widetilde{\gamma }_{20}^{-}-\gamma
_{20}^{-}\right) -\gamma _{10}^{-}\left( \lambda ^{2}\widetilde{\gamma }%
_{20}^{+}-\gamma _{20}^{+}\right) \right)
\end{equation*}%
\begin{equation*}
\times \left( \left( \delta _{11}-\lambda ^{2}\widetilde{\delta }%
_{11}\right) \cos \frac{\lambda \left( c-a\right) }{p_{1}}+\frac{p_{1}\left(
\delta _{10}-\lambda ^{2}\widetilde{\delta }_{10}\right) }{\lambda }\sin
\frac{\lambda \left( c-a\right) }{p_{1}}\right.
\end{equation*}%
\begin{equation*}
\left. \left. +\frac{1}{p_{1}\lambda }\int\limits_{a}^{{c}}q\left( \tau
\right) \sin \frac{\lambda \left( c-\tau \right) }{p_{1}}\vartheta
^{-}\left( \tau -\Delta \left( \tau \right) ,\lambda \right) d\tau \right)
\right) \cos \frac{\lambda \left( b-c\right) }{p_{2}}
\end{equation*}%
\begin{equation*}
\left. \left. +\frac{1}{p_{2}^{2}}\int\limits_{c+}^{{b}}q\left( \tau \right)
\cos \frac{\lambda \left( b-\tau \right) }{p_{2}}\vartheta ^{+}\left( \tau
-\Delta \left( \tau \right) ,\lambda \right) d\tau \right] \right\} =0.
\end{equation*}%
From Lemma 2, the following equalities
\begin{equation*}
\vartheta ^{-}\left( \tau -\Delta \left( \tau \right) ,\lambda \right)
=-\lambda ^{2}\widetilde{\delta }_{11}\cos \frac{\lambda \left( \tau -\Delta
\left( \tau \right) -a\right) }{p_{1}}+O\left( \lambda \right) ,
\end{equation*}%
\begin{equation*}
\frac{\partial \vartheta ^{-}\left( \tau -\Delta \left( \tau \right)
,\lambda \right) }{\partial x}=\frac{\lambda ^{3}\widetilde{\delta }_{11}}{%
p_{1}}\sin \frac{\lambda \left( \tau -\Delta \left( \tau \right) -a\right) }{%
p_{1}}+O\left( \lambda ^{2}\right) ,
\end{equation*}%
\begin{equation*}
\vartheta ^{+}\left( \tau -\Delta \left( \tau \right) ,\lambda \right) =%
\frac{\lambda ^{4}p_{2}\widetilde{\delta }_{11}\widetilde{\gamma }_{21}^{-}}{%
p_{1}\gamma _{21}^{+}}\sin \frac{\lambda \left( c-a\right) }{p_{1}}\sin
\frac{\lambda \left( \tau -\Delta \left( \tau \right) -c\right) }{p_{2}}%
+O\left( \lambda ^{3}\right) ,
\end{equation*}%
\begin{equation*}
\frac{\partial \vartheta ^{+}\left( \tau -\Delta \left( \tau \right)
,\lambda \right) }{\partial x}=\frac{\lambda ^{5}\widetilde{\delta }_{11}%
\widetilde{\gamma }_{21}^{-}}{p_{1}\gamma _{21}^{+}}\sin \frac{\lambda
\left( c-a\right) }{p_{1}}\cos \frac{\lambda \left( \tau -\Delta \left( \tau
\right) -c\right) }{p_{2}}+O\left( \lambda ^{4}\right)
\end{equation*}%
hold as $\lambda \rightarrow \infty $ and using these approximations, we have%
\begin{equation}
\Xi (\lambda )=-\frac{\widetilde{\delta }_{11}\widetilde{\delta }_{21}%
\widetilde{\gamma }_{21}^{-}\lambda ^{7}}{\gamma _{21}^{+}p_{1}}\sin \frac{%
\lambda \left( c-a\right) }{p_{1}}\cos \frac{\lambda \left( b-c\right) }{%
p_{2}}+O\left( \lambda ^{6}\right) =0.  \tag{14}
\end{equation}

Let $\>\lambda \>$ be sufficiently large. Obviously, for large$\>\lambda \>$%
Eq. (14) has, evidently, an infinite set of roots. The proof is complete.
\end{proof}

By Theorem 2 we conclude that the problem (1)-(5) has infinitely many
nontrivial solutions.

Solving the Eq. (14), we have

\begin{equation*}
\Gamma =\left\{ \lambda _{n}:\lambda _{n}=\frac{p_{2}\pi \left( n+\frac{1}{2}%
\right) }{b-c}+O\left( \frac{1}{n}\right) \text{ or }\lambda _{n}=\frac{%
p_{1}\pi n}{c-a}+O\left( \frac{1}{n}\right) ,\text{ }n=1,2,...\right\}
\end{equation*}%
for the spectrum of (1)-(5).

Now we are ready to present asymptotic expressions of eigenfunctions. Using
Lemma 2 and replacing $\lambda $ by $\lambda _{n}\in \Gamma $ we obtain the
next theorem. We see that there correspond two eigenfunctions for each $n.$

\begin{theorem}
The following asymptotic formulas hold for eigenfunctions of
boundary-value-transmission problem (1)-(5) for each $x\in \Omega $ and $%
\lambda _{n}\in \Gamma $ $\left( n=1,2,...\right) $:%
\begin{align*}
\vartheta _{\left( 1\right) }^{-}\left( x,\lambda _{n}\right) & =-\frac{%
n^{2}\pi ^{2}p_{1}^{2}\widetilde{\delta }_{11}}{\left( c-a\right) ^{2}}\cos
\frac{n\pi \left( x-a\right) }{c-a}+O(n), \\
\vartheta _{\left( 2\right) }^{-}\left( x,\lambda _{n}\right) & =-\frac{%
\left( n+\frac{1}{2}\right) ^{2}\pi ^{2}p_{2}^{2}\widetilde{\delta }_{11}}{%
\left( b-c\right) ^{2}}\cos \frac{\left( n+\frac{1}{2}\right) \pi
p_{2}\left( x-a\right) }{p_{1}\left( b-c\right) }+O(n),
\end{align*}%
\begin{align*}
\vartheta _{(1)}^{+}\left( x,\lambda _{n}\right) & =\frac{n^{3}\pi
^{4}p_{1}^{2}p_{2}\widetilde{\delta }_{11}\widetilde{\gamma }_{21}^{-}}{%
\gamma _{21}^{+}\left( c-a\right) ^{3}}\sin \frac{n\pi p_{1}\left(
x-c\right) }{p_{2}\left( c-a\right) }+O\left( n^{2}\right) , \\
\vartheta _{(2)}^{+}\left( x,\lambda _{n}\right) & =\frac{\left( n+\frac{1}{2%
}\right) ^{4}\pi ^{4}p_{2}^{5}\widetilde{\delta }_{11}\widetilde{\gamma }%
_{21}^{-}}{p_{1}\gamma _{21}^{+}\left( b-c\right) ^{4}}\left\{ \sin \frac{%
\left( n+\frac{1}{2}\right) \pi p_{2}\left( c-a\right) }{p_{1}\left(
b-c\right) }\right. \\
& \times \left[ \cos \frac{\left( n+\frac{1}{2}\right) \pi \left( x-c\right)
}{b-c}-\frac{x-c}{np_{2}}\sin \frac{\left( n+\frac{1}{2}\right) \pi \left(
x-c\right) }{b-c}\right] \\
& -\left. \frac{c-a}{np_{1}}\cos \frac{\left( n+\frac{1}{2}\right) \pi
p_{2}\left( c-a\right) }{p_{1}\left( b-c\right) }\cos \frac{\left( n+\frac{1%
}{2}\right) \pi \left( x-c\right) }{b-c}\right\} +O\left( n^{2}\right) .
\end{align*}
\end{theorem}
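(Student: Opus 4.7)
The plan is to insert each of the two families of eigenvalues $\lambda_n\in\Gamma$ into the asymptotic expressions given by Lemma 2, and then Taylor-expand the trigonometric arguments to the order dictated by the claimed remainders. Denote the two families by $\lambda_n^{(1)} = p_1\pi n/(c-a)+O(1/n)$ and $\lambda_n^{(2)} = p_2\pi(n+\tfrac{1}{2})/(b-c)+O(1/n)$. Each phase of the form $\lambda_n\xi/p_i$ that appears in Lemma 2 is $O(1/n)$-close to a fixed constant, so Lipschitz continuity of $\sin$ and $\cos$ replaces $\lambda_n$ by its leading value at the cost of an $O(1/n)$ multiplicative correction on the relevant trigonometric factor.

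For the two formulas on $\overline{\Omega^-}$ this is essentially immediate. Lemma 2 gives $\vartheta^-(x,\lambda) = -\lambda^{2}\widetilde{\delta}_{11}\cos(\lambda(x-a)/p_{1})+O(\lambda)$, whose prefactor is $O(n^{2})$; the Lipschitz correction of the cosine argument therefore contributes $O(n^{2})\cdot O(1/n)=O(n)$, and direct substitution of $\lambda_n^{(1)}$ and $\lambda_n^{(2)}$ produces $\vartheta^{-}_{(1)}$ and $\vartheta^{-}_{(2)}$ on the nose.

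The formulas on $\overline{\Omega^+}$ require more care because of a near-resonance cancellation. The leading term in Lemma 2 for $\vartheta^+$ is the $\lambda^{4}$ multiple of $\sin(\lambda(c-a)/p_{1})\sin(\lambda(x-c)/p_{2})$; at each family one of these two sines degenerates into an $O(1/n)$ quantity (the inner sine at $\lambda_n^{(1)}$, and the outer sine at $\lambda_n^{(2)}$, the latter being an $O(1/n)$-perturbation of $\sin((n+\tfrac{1}{2})\pi(x-c)/(b-c))$ which evaluates to $0$ at $x=b$ in a controlled way through the characteristic equation (14)). One power of $n$ is therefore lost, so the natural $O(n^{4})$ size drops to $O(n^{3})$. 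For $\vartheta^{+}_{(1)}$ this explains the result directly, the surviving outer sine being approximated to $O(1/n)$ by $\sin(n\pi p_{1}(x-c)/(p_{2}(c-a)))$. For $\vartheta^{+}_{(2)}$ one must additionally expand the arguments of the surviving trigonometric factors to \emph{second} order in $1/n$: a first-order perturbation of $\lambda_n^{(2)}$ inside $\sin(\lambda_n^{(2)}(x-c)/p_{2})$ and $\sin(\lambda_n^{(2)}(c-a)/p_{1})$ multiplies a prefactor of size $n^{4}$ and leaves a genuine contribution of size $n^{3}$. Carrying this expansion out produces the correction summands $-(x-c)/(np_{2})\sin(\cdots)$ and $-(c-a)/(np_{1})\cos(\cdots)\cos(\cdots)$ displayed in the statement.

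The principal obstacle is the bookkeeping in the case of $\vartheta^{+}_{(2)}$: one must track which corrections coming from three places---the $O(1/n)$ tail of $\lambda_n^{(2)}$ itself, the Taylor expansion of the outer sine about the zero $(n+\tfrac{1}{2})\pi$, and the Taylor expansion of the inner sine about its leading nonzero value---actually contribute at order $n^{3}$ rather than being absorbed into the $O(n^{2})$ remainder. Once this bookkeeping is done and the integral remainders of Lemma 1 are confirmed (using the bounds already established in Lemma 2) to be $O(n^{2})$ at each eigenvalue, the four claimed asymptotics follow.
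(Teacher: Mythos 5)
Your overall strategy --- substitute the two eigenvalue families from $\Gamma$ into the asymptotics of Lemma 2 and expand --- is exactly the paper's (its entire argument is the sentence ``Using Lemma 2 and replacing $\lambda$ by $\lambda_{n}\in\Gamma$ we obtain the next theorem''), and your accounting for $\vartheta^{-}_{(1)}$ and $\vartheta^{-}_{(2)}$ is correct. But your analysis of the $\Omega^{+}$ case contains a genuine error. At $\lambda_{n}^{(2)}=p_{2}\pi(n+\tfrac{1}{2})/(b-c)+O(1/n)$ the factor $\sin\bigl(\lambda_{n}^{(2)}(x-c)/p_{2}\bigr)=\sin\bigl((n+\tfrac{1}{2})\pi(x-c)/(b-c)\bigr)+O(1/n)$ does \emph{not} degenerate for generic $x\in\Omega^{+}$, and at $x=b$ it equals $(-1)^{n}$, not $0$; the quantity that Eq. (14) forces to be small at this family is $\cos(\lambda(b-c)/p_{2})$, which lives at the fixed endpoint $b$ and is not a factor of $\vartheta^{+}(x,\lambda)$ for variable $x$. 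Hence no power of $n$ is lost for the second family: the leading term of $\vartheta^{+}_{(2)}$ is genuinely of size $n^{4}$, exactly as the $(n+\tfrac{1}{2})^{4}$ prefactor in the statement shows, and your assertion that ``the natural $O(n^{4})$ size drops to $O(n^{3})$'' at each family contradicts the very formula you are proving.

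There is a second gap affecting the explicit coefficients. The leading term of $\vartheta^{+}_{(1)}$ and the $\tfrac{1}{n}$-correction summands of $\vartheta^{+}_{(2)}$ are proportional to the coefficient of the $1/n$ term in $\lambda_{n}$, but $\Gamma$ records that correction only as $O(1/n)$. Equation (14) gives $\sin(\lambda(c-a)/p_{1})\cos(\lambda(b-c)/p_{2})=O(1/\lambda)$ and therefore pins down $\sin(\lambda_{n}^{(1)}(c-a)/p_{1})$ only up to an undetermined bounded multiple of $1/n$; invoking a degeneration ``controlled through the characteristic equation'' does not by itself yield the explicit factor $n^{3}\pi^{4}p_{1}^{2}p_{2}/(c-a)^{3}$, nor the exact summands $-\frac{x-c}{np_{2}}\sin(\cdots)$ and $-\frac{c-a}{np_{1}}\cos(\cdots)\cos(\cdots)$. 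To justify these you would have to compute the $1/n$ term of $\lambda_{n}$ explicitly from (14), which neither your sketch nor the paper does. Note finally that direct substitution of $\lambda_{n}^{(2)}$ into Lemma 2 produces $\sin\bigl((n+\tfrac{1}{2})\pi(x-c)/(b-c)\bigr)$ as the $x$-dependent factor, whereas the stated formula for $\vartheta^{+}_{(2)}$ has a cosine there; your proposal does not account for this discrepancy.
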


\end{document}